\newcommand{\todo}[1]{\marginpar{\textbf{TODO\footnotemark}}\@latex@warning{TODO: #1}\footnotetext{ #1}}
\theoremstyle{plain}
\theoremstyle{definition}
\newtheorem{theorem}{Theorem}[section]
\newtheorem{lemma}[theorem]{Lemma}
\theoremstyle{definition}
\newtheorem{definition}[theorem]{Definition}
\theoremstyle{remark}
\newcommand{\m}{\ensuremath{\mathfrak{M}}}
\newcommand{\SFour}{\mathsf{S4}}
\newcommand{\iSFour}{\mathsf{iS4}}
\newcommand{\LP}{\mathsf{LP}}
\newcommand{\iLP}{\mathsf{iLP}}
\newcommand{\LJ}{\mathcal{L}_\mathsf{J}}
\newcommand{\Prop}{\mathsf{Prop}}
\newcommand{\Pow}{\mathcal{P}}
\newcommand{\justifies}{:}
\newcommand{\CS}{\mathsf{CS}}
\newcommand{\Tm}{\mathsf{Tm}}
\newcommand{\HJI}{\mathsf{iJT4}}
\newcommand{\HJICS}{{\HJI}_{\CS}}
\newcommand{\bang}{\mathop{!}\!}
\newcommand{\M}{\m}
\begin{document}

\title{Modular Models\\for Intuitionistic Justification Logic}
\author{Michel Marti \and Thomas Studer}

\maketitle

\begin{abstract}
We present the logic iJT4, which is an explicit version of intuitionistic S4 and establish soundness and completeness with respect to modular models.
\end{abstract}

\section{Introduction}

Justification logics are explicit modal logics in the sense that they 
unfold the $\Box$-modality in families of so-called justification terms.
Instead of formulas $\Box A$, meaning that $A$ is known, justification logics include formulas $t:A$, meaning that $A$ is known for reason $t$.

Artemov's original semantics for the first justification logic, the Logic of Proofs~$\LP$, was a provability semantics that interpreted $t:A$ roughly as \emph{$t$ represents a proof of $A$} in the sense of a formal proof predicate in Peano Arithmetic~\cite{Art95TR,Art01BSL,KSweak}.

Later Fitting~\cite{Fit05APAL} interpreted justifications as evidence in a more general sense and introduced epistemic, i.e., possible world, models for justification logics. These models have been further developed to modular models as we use them in this paper~\cite{Art12SL,KuzStu12AiML}. 
This general reading of justification led to many applications in epistemic logic~\cite{Art06TCS,Art08RSL,BalRenSme14APAL,
BucKuzStu11JANCL,BucKuzStu11WoLLIC,bks12a,2014arXiv1407.4647G,komaogst,KuzStu13LFCS}. 

Given the interpretation of $\LP$ in Peano Arithmetic, it was a natural question to find an intuitionistic version $\iLP$ of $\LP$ that is the logic of proofs of Heyting arithmetic. The work by Artemov and Iemhoff~\cite{ArtIem07JSL} and later by Dashkov~\cite{Das11JLC} provides such an $\iLP$. It turned out that $\iLP$ is not only $\LP$ with the underlying logic changed to intuitionistic propositional logic. In order to get a complete axiomatization with respect to provability semantics, one also has to include certain admissible rules of Heyting arithmetic as axioms in $\iLP$ so that they are represented by novel proof terms.

The main contribution of the present paper is that these additional axioms are not needed if we are interested in completeness with respect to modular models. 
We introduce the intuitionistic justification logic $\HJICS$, which is simply $\LP$ over an intuitionistic base instead of a classical one but without any additional axioms. We introduce possible world models for $\HJICS$ that are inspired by the Kripke semantics for intuitionistic $\SFour$ and establish completeness of $\HJICS$ with respect to these models.

\section{Intuitionistic Justification Logic}

In this section, we introduce the syntax for the justification logic $\HJICS$, which is the explicit analogue of the intuitionistic modal logic $\iSFour$.

\begin{definition}[Justification Terms]
We assume a countable set of justification constants and a countable set of justification variables. Justification terms are inductively defined by:
\begin{enumerate}
 \item each justification constant and each justification variable is a justification term;
 \item  if $s$ and $t$ are justification terms, then so are 
 \begin{itemize}
  \item $(s \cdot t)$, read $s$ dot $t$,
  \item $(s + t)$, read $s$ plus $t$,
  \item $\bang s$, read bang $s$.
 \end{itemize}
\end{enumerate}
We denote the set of terms by $\Tm$.
\end{definition}

\begin{definition}[Formulas]
We assume a countable set~$\Prop$ of atomic propositions.
The set of formulas~$\LJ$ is inductively defined by:
\begin{enumerate}
 \item every atomic proposition is a formula;
 \item the constant symbol $\bot$ is a formula;
 \item If $A$ and $B$ are formulas, then $(A \land B)$, $(A \lor B)$ and $(A \rightarrow B)$ are formulas;
 \item if $A$ is a formula and $t$ a term, then $t:A$  is a formula.
\end{enumerate} 
\end{definition}

\begin{definition}
The axioms of $\HJI$ consist of the following axioms:
\begin{enumerate}
\item all axioms for intuitionistic propositional logic
\item $t \justifies (A \rightarrow B) \rightarrow ( s \justifies A \rightarrow t\cdot s \justifies B)$
\item $t \justifies A \rightarrow t+s \justifies A$ and $s \justifies A \rightarrow t+s \justifies A$
\item $t\justifies A \to A$
\item $t\justifies A \to \bang t \justifies t \justifies A$
\end{enumerate}
A {\em constant specification} $\CS$ is any subset
\[
\CS \subseteq \{ (c,A)  \ |\ \text{$c$ is a constant and $A$ is an axiom of $\HJI$} \}.
\]
A constant specification $\CS$ is called
{\em axiomatically appropriate} if for each axiom $A$ of $\HJI$, there is a constant $c$ such that $(c,  A) \in \CS$.

For a constant specification $\CS$ the deductive system $\HJICS$ is the Hilbert system given by the axioms above  and 
by the rules modus ponens and axiom necessitation:
  \[
  \begin{array}{c}
    A \quad A\rightarrow B
    \\ \hline
    B
  \end{array}
  \qquad
  \begin{array}{c}
    (c, A) \in \CS
    \\ \hline
    c \justifies A
  \end{array}
  \]
\end{definition}

As usual in justification logic, we can establish the deduction theorem and
the internalization property.

\begin{theorem}[Deduction Theorem] \label{Deduction Theorem}
For every set of formulas $M$ and all formulas $A,B$ we have that
\[
 M \cup \{ A \} \vdash_{\HJICS} B \quad \Longleftrightarrow \quad M \vdash_{\HJICS} A \to B.
\]
\end{theorem}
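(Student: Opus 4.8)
The plan is to treat the two directions separately, with the forward implication handled by the usual induction on the length of a derivation and the backward implication following immediately from modus ponens. For the direction $M \vdash_{\HJICS} A \to B \Rightarrow M \cup \{ A \} \vdash_{\HJICS} B$ I would argue directly: from $M \vdash_{\HJICS} A \to B$ we obtain $M \cup \{ A \} \vdash_{\HJICS} A \to B$ by weakening the set of assumptions, while $M \cup \{ A \} \vdash_{\HJICS} A$ holds trivially because $A$ is an assumption; a single application of modus ponens then yields $M \cup \{ A \} \vdash_{\HJICS} B$.

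For the converse, suppose $M \cup \{ A \} \vdash_{\HJICS} B$ and fix a derivation $C_1, \dots, C_n = B$ of $B$ from $M \cup \{ A \}$. I would prove by induction on $i$ that $M \vdash_{\HJICS} A \to C_i$ for every $i$, so that the case $i = n$ gives the claim. If $C_i$ is an axiom, a member of $M$, or the assumption $A$ itself, the step is routine: in the first two cases $M \vdash_{\HJICS} C_i$, and combining this with the intuitionistic tautology $C_i \to (A \to C_i)$ by modus ponens gives $M \vdash_{\HJICS} A \to C_i$; in the third case $A \to C_i$ is just $A \to A$, which is an intuitionistic theorem and hence derivable from any set of assumptions.

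If $C_i$ is obtained by modus ponens from earlier formulas $C_j$ and $C_k = C_j \to C_i$, then the induction hypothesis yields $M \vdash_{\HJICS} A \to C_j$ and $M \vdash_{\HJICS} A \to (C_j \to C_i)$, and the intuitionistic tautology $(A \to (C_j \to C_i)) \to ((A \to C_j) \to (A \to C_i))$ together with two applications of modus ponens gives $M \vdash_{\HJICS} A \to C_i$. Here it matters only that the intuitionistic propositional base supplies the two combinators above; no use is made of classical principles.

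The remaining rule is axiom necessitation, and this is the step that deserves the most care, since in the modal setting the analogous necessitation rule is precisely what blocks the deduction theorem. In the present system, however, the rule applies only to pairs $(c, D) \in \CS$ with $D$ an axiom, and its conclusion $c : D$ carries no dependence on the assumptions. Thus whenever $C_i = c : D$ arises by axiom necessitation we already have $M \vdash_{\HJICS} c : D$ outright, and the tautology $c : D \to (A \to c : D)$ delivers $M \vdash_{\HJICS} A \to C_i$ exactly as in the axiom case. I expect no genuine obstacle: the whole argument rests on the intuitionistic base providing the required propositional tautologies and on the observation that axiom necessitation never introduces a real dependence on $A$.
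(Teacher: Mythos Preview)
Your argument is correct and is the standard proof of the deduction theorem for Hilbert-style systems; the only point requiring attention in justification logics is precisely the one you single out, namely that axiom necessitation has no premises and hence introduces no dependence on~$A$, so it behaves like an additional axiom rather than like modal necessitation. The paper itself does not give a proof of this theorem at all (it is stated without proof, as is customary), so there is no alternative approach to compare with; your write-up would serve perfectly well as the missing justification.
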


\begin{lemma}[Internalization for Arbitrary Terms] \label{Internalization for Arbitrary Terms}
Let $\CS$ be an axiomatically appropriate constant specification. For arbitrary formulas $A, B_1 , \dots , B_n$ and arbitrary terms $s_1 , \dots , s_n$, if
\[
 B_1 ,\dots , B_ n \vdash_{\HJICS} A,
\]
then there is a term $t \in \Tm$ such that
\[
 s_1 : B_1 , \dots , s_n :B_n \vdash_{\HJICS} t:A.
\]
\end{lemma}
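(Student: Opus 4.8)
The plan is to proceed by induction on the length of a derivation witnessing $B_1, \dots, B_n \vdash_{\HJICS} A$, performing a case distinction on the justification of the last formula $A$ in that derivation. In each case I would exhibit an explicit term $t$ and argue that $s_1:B_1, \dots, s_n:B_n \vdash_{\HJICS} t:A$. Since the hypotheses $s_1:B_1,\dots,s_n:B_n$ remain available at every stage, I only need to produce the derivation of $t:A$ corresponding to the final step, invoking the induction hypothesis for the premises of that step.

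First the base cases. If $A$ is one of the hypotheses $B_i$, then $s_i:B_i$ is literally among the assumptions, so I take $t := s_i$ and the claim is immediate. If $A$ is an axiom of $\HJI$, then axiomatic appropriateness of $\CS$ supplies a constant $c$ with $(c,A) \in \CS$; axiom necessitation then yields $\vdash_{\HJICS} c:A$, so $t := c$ works.

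Next the inductive steps. If $A$ was obtained by modus ponens from earlier formulas $C$ and $C \to A$, the induction hypothesis provides terms $u,v$ with $s_1:B_1,\dots,s_n:B_n \vdash_{\HJICS} u:C$ and $s_1:B_1,\dots,s_n:B_n \vdash_{\HJICS} v:(C \to A)$. The application axiom (axiom~2), instantiated as $v:(C\to A) \to (u:C \to v\cdot u:A)$, then delivers $v \cdot u:A$ after two applications of modus ponens; thus $t := v\cdot u$. If instead $A$ was obtained by axiom necessitation, then $A$ has the shape $c:A'$ with $(c,A')\in\CS$. Again axiom necessitation gives $c:A'$, and the positive-introspection axiom (axiom~5), instantiated as $c:A' \to \bang c:c:A'$, yields $\bang c:A$ by modus ponens; hence $t := \bang c$.

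The argument is essentially routine, and there is no single hard obstacle: the two load-bearing ingredients are the application axiom, which absorbs modus ponens, and the $\bang$-axiom, which handles axiom necessitation. The only point requiring a little care is the axiom-necessitation case, where one must recognise that the derived formula is itself of the form $c:A'$ and that the $\bang$-axiom is exactly what internalises the one-line derivation of it. I would also note that the intuitionistic (rather than classical) propositional base plays no role in this lemma, since the construction appeals only to the justification axioms~2 and~5 together with modus ponens.
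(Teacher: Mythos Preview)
Your argument is correct and is precisely the standard internalization proof for justification logics: induction on the derivation, with hypotheses handled by the given $s_i$, axioms by constants supplied via axiomatic appropriateness, modus ponens by the application axiom, and axiom necessitation by positive introspection. The paper itself omits the proof of this lemma (it is stated without proof), so there is nothing to compare against; your write-up is exactly the routine verification one would expect here.
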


\section{Basic Modular Models}

Basic modular models are syntactic models for justification logic.
Yet, our basic modular models will include possible worlds in order to deal with the intuitionistic base logic.
After defining basic modular models for intuitionistic justification logic, we will prove soundness and completeness.

\begin{definition}[Basic evaluation]
 A  \emph{basic evaluation} is a tuple $(W, \leq, *)$ where
 \[
  W \neq \varnothing  \text{ and $\leq$ is a partial order on } W,
 \]
 \[
  * : \Prop \times W \to \{0,1 \} \quad \quad * : \Tm \times W \to \Pow(\LJ)
 \]
(where we often write $t^*_w$ for $*(t,w)$),
such that for arbitrary $s, t \in \Tm$ and any formula $A$,
   \begin{itemize}
    \item[(1)] $s^*_w  \cdot t^*_w \subseteq (s \cdot t)^*$;
    \item[(2)] $s^*_w  \cup t^*_w  \subseteq (s + t)^*_w$;
    \item[(3)] $(t,A) \in \CS \quad \Longrightarrow \quad A \in t^*_w$;
    \item[(4)] $s:s^*_w \subseteq (\bang s)^*_w $.
   \end{itemize}
 Furthermore, it has to satisfy the following monotonicity conditions:
\begin{itemize}
 \item[(M1)] $  p^*_w = 1 \text{ and } w \leq v \quad \Longrightarrow \quad
p^*_v = 1$;
 \item[(M2)] $  w \leq v \quad \Longrightarrow \quad t^*_w \subseteq p^*_v$.
\end{itemize}
\end{definition}
Strictly speaking we should use the notion of a $\CS$ basic evaluation because of condition (3) depends on a  given $\CS$.
However, the constant specification will always be clear from the context and we can safely omit it. The same also holds for modular models (to be introduced later).

\begin{definition}[Truth under Basic Evaluation]
Let $\M = (W, \leq, *)$ be a basic evaluation. 
For $w \in W$,  we define $(\M,w) \vDash A$ by induction on the formula $A$ as follows:
 \begin{itemize}
 \item $(\M,w)  \nvDash \bot$;
 \item $(\M,w)  \vDash p$ iff $*(p,w) = 1$;
 \item $(\M,w)  \vDash A \land B$ iff $(\M,w)  \vDash A$ and $(\M,w)  \vDash B$;
 \item $(\M,w)  \vDash A \lor B$ iff $(\M,w)  \vDash A$ or $(\M,w)  \vDash B$;
 \item $(\M,w)  \vDash A \rightarrow B$ iff $(\M,v) \vDash B$ for all $v \geq w$ with $(\M,v) \vDash A$;
 \item $(\M,w)  \vDash t:A$ iff $A \in t^*_w$.
 \end{itemize}
\end{definition}

\begin{lemma}[Monotonicity]
For any basic evaluation $\M = (W, \leq, *)$, states $w, v \in W$ and any formula $A$:

\[
 (\M,w) \vDash A \text{ and } w \leq v   \quad \Longrightarrow \quad (\M,v) \vDash A.
\]
\end{lemma}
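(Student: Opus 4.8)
The plan is to prove the statement by induction on the structure of the formula $A$, following exactly the shape of the clauses in the definition of truth under a basic evaluation. Throughout, I fix states $w \le v$, assume $(\M,w) \vDash A$, and aim to derive $(\M,v) \vDash A$.

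For the base cases, if $A = \bot$ the claim holds vacuously, since $(\M,w) \nvDash \bot$ means the hypothesis is never met. If $A = p$ is atomic, then $(\M,w) \vDash p$ says $p^*_w = 1$, and monotonicity condition (M1) together with $w \le v$ yields $p^*_v = 1$, i.e.\ $(\M,v) \vDash p$. If $A = t : B$, then $(\M,w) \vDash t : B$ says $B \in t^*_w$; monotonicity condition (M2) gives $t^*_w \subseteq t^*_v$, hence $B \in t^*_v$ and so $(\M,v) \vDash t : B$. Note that these three ``atomic-style'' clauses are each discharged directly by the corresponding monotonicity requirement built into the definition of a basic evaluation, rather than by any induction hypothesis.

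For the inductive cases for $\land$ and $\lor$ I would simply push the induction hypothesis through the connective: from $(\M,w) \vDash B \land C$ we get $(\M,w) \vDash B$ and $(\M,w) \vDash C$, apply the induction hypothesis to each conjunct to obtain $(\M,v) \vDash B$ and $(\M,v) \vDash C$, and conclude $(\M,v) \vDash B \land C$; the disjunction case is identical, applying the induction hypothesis to whichever disjunct witnesses the truth of $B \lor C$ at $w$.

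The one case I would treat most carefully is the implication $A = B \to C$, and it is slightly different in character: the argument does \emph{not} invoke the induction hypothesis at all, but instead relies on transitivity of $\le$. Suppose $(\M,w) \vDash B \to C$ and let $u \ge v$ be any state with $(\M,u) \vDash B$. Since $w \le v \le u$, transitivity gives $w \le u$, so the truth clause for implication evaluated at $w$ applies to $u$ and yields $(\M,u) \vDash C$. As $u$ was an arbitrary successor of $v$ forcing $B$, this establishes $(\M,v) \vDash B \to C$, completing the induction. The only real obstacle is thus conceptual rather than technical: one must recognize that persistence of implications is secured by transitivity of the order, which is precisely why the clause for $\to$ quantifies over all later states in the first place, and not by the inductive hypothesis as in the other connective cases.
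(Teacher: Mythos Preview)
Your proof is correct and is precisely the standard argument by structural induction that the paper evidently has in mind; in fact the paper omits the proof of this lemma entirely, so there is nothing to compare against beyond noting that your case analysis matches the truth clauses and uses (M1), (M2), and transitivity of $\leq$ exactly where they are needed.
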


\begin{definition}[Factive Evaluation]
A basic evaluation $\M = (W, \leq, * )$ is called \emph{factive} iff 
\[
 A \in t^*_w \quad \Longrightarrow \quad (\M,w) \vDash A
\]
for all formulas $A$, all justification terms $t$ and all states $w \in W$.
\end{definition}

\begin{definition}[Basic modular model]
A \emph{basic modular model} is a basic evaluation $(W, \leq, *)$ that is factive.

We say that a formula $A$ is \emph{valid with respect to basic modular models} (in symbols $\vDash_\mathrm{basic modular} A$)
if for any basic modular model  $\M = (W, \leq, * )$ and any $w \in W$ we have $ (\M,w) \vDash A$.
\end{definition}

\begin{lemma}[Soundness of $\HJICS$ with respect to basic modular models]
For every formula $A$:
\[
 \vdash A \quad \Longrightarrow \quad \vDash_\mathrm{basic modular} A
\]

\end{lemma}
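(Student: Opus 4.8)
The plan is to prove soundness by induction on the length of derivations in $\HJICS$. Since validity with respect to basic modular models means truth at every state of every basic modular model, I fix an arbitrary basic modular model $\M = (W, \leq, *)$ and an arbitrary state $w \in W$, and show that every theorem of $\HJICS$ is true at $(\M, w)$. The induction has two kinds of base cases (the axioms) and two inductive steps (the rules modus ponens and axiom necessitation).

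For the axioms, I would proceed axiom schema by schema. The intuitionistic propositional axioms are handled by the standard fact that the forcing relation $\vDash$ defined over a partial order $(W, \leq)$ with the given clauses is an intuitionistic Kripke model, so all intuitionistic propositional validities hold at every state; here the Monotonicity Lemma is exactly what guarantees the forcing clause for $\rightarrow$ behaves correctly, and I would invoke it freely. For the application axiom (2), suppose $(\M, v) \vDash t\justifies(A \to B)$ and $(\M, v) \vDash s \justifies A$ for some $v \geq w$; then $A \to B \in t^*_v$ and $A \in s^*_v$, so $B \in t^*_v \cdot s^*_v \subseteq (t \cdot s)^*_v$ by closure condition (1), giving $(\M, v) \vDash t\cdot s \justifies B$. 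The sum axioms (3 in the axiom list) use closure condition (2) of basic evaluations identically. The crucial point is that factivity handles axiom (4), $t\justifies A \to A$: if $A \in t^*_v$ then $(\M, v) \vDash A$ directly by the definition of factive evaluation, which is precisely the extra requirement imposed on a basic modular model over a bare basic evaluation. For axiom (5), $t\justifies A \to \bang t\justifies t\justifies A$, suppose $A \in t^*_v$; then $t\justifies A \in t\justifies t^*_v \subseteq (\bang t)^*_v$ by closure condition (4), so $(\M, v) \vDash \bang t \justifies t\justifies A$. Throughout I must check each implication at all $v \geq w$ rather than only at $w$, as the forcing clause for $\rightarrow$ demands, but since $v$ ranges over an arbitrary state anyway this adds no real content.

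For the rules, modus ponens is immediate: if $(\M, w) \vDash A$ and $(\M, w) \vDash A \to B$, then taking $v = w$ in the $\rightarrow$-clause (using reflexivity of $\leq$) gives $(\M, w) \vDash B$. For axiom necessitation, if $(c, A) \in \CS$ then closure condition (3) yields $A \in c^*_w$, hence $(\M, w) \vDash c\justifies A$.

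I do not expect any serious obstacle here; soundness proofs of this shape are routine once the semantics is set up, and each axiom corresponds almost transparently to one defining condition of a basic modular model. The one point requiring a little care is the interaction with the intuitionistic $\rightarrow$-clause: because implications are evaluated over all successors $v \geq w$, every axiom of the form $\varphi \to \psi$ must be verified at an arbitrary $v$, and I rely on the fact that the closure conditions (1)--(4) and the monotonicity conditions (M1), (M2) hold at every world uniformly. The Monotonicity Lemma is what makes the propositional fragment behave, and factivity is what makes the factivity axiom (4) go through; identifying these correspondences is the substance of the argument, and the remaining verifications are direct unfoldings of the definitions.
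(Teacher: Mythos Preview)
Your proposal is correct and is the standard argument one would expect here. The paper itself states the soundness lemma without proof, so there is no approach to compare against; your induction on derivations, matching each axiom schema to the corresponding closure condition (1)--(4), factivity, and the monotonicity conditions, is exactly the intended verification.
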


In order to show completeness, we need some auxiliary definitions and lemmas.

\begin{definition}
We call a set of formulas $\Delta$ \emph{prime} iff it satisfies the following conditions:

\begin{itemize}
 \item[(i)] $\Delta$ has the disjunction property, i.e., $A \lor B \in \Delta \Longrightarrow A \in \Delta$ or $B \in \Delta$;
 
 \item[(ii)] $\Delta$ is deductively closed, i.e., for any formula $A$, if $\Delta \vdash A$, then $A \in \Delta$; 
 
 \item[(iii)] $\Delta$ is consistent, i.e., $\bot \notin \Delta$.
\end{itemize}

From now on, we will use $\Sigma, \Delta, \Gamma$ for prime sets of formulas.

\end{definition}

\begin{lemma}
 Let $N$ be an arbitrary set of formulas and let $A, B$ and $C$ be formulas.
 If 
 \[
  N \cup \{A \lor B \} \nvdash C \text{, then } N \cup \{A \} \nvdash C \text{ or } N \cup \{B \} \nvdash C.
 \]
\end{lemma}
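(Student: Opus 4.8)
The plan is to prove the contrapositive in the form it is already stated—assuming $N \cup \{A \lor B\} \nvdash C$ and deriving that at least one of the two weaker premise sets fails to prove $C$. So I would argue by contradiction: suppose instead that both $N \cup \{A\} \vdash C$ and $N \cup \{B\} \vdash C$, and aim to derive $N \cup \{A \lor B\} \vdash C$, contradicting the hypothesis.

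The key tool is the Deduction Theorem (Theorem~\ref{Deduction Theorem}), which is available for $\HJICS$. First I would apply it to each of the two assumed derivations to get $N \vdash A \to C$ and $N \vdash B \to C$. Next I would invoke the intuitionistic propositional reasoning built into the axioms of $\HJI$: the theorem scheme $(A \to C) \to ((B \to C) \to ((A \lor B) \to C))$ is derivable in intuitionistic propositional logic and hence is a theorem of $\HJICS$, so from $N \vdash A \to C$ and $N \vdash B \to C$ two applications of modus ponens yield $N \vdash (A \lor B) \to C$. Finally, applying the Deduction Theorem once more in the other direction gives $N \cup \{A \lor B\} \vdash C$, which is exactly the contradiction sought. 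This establishes the contrapositive and therefore the lemma.

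I do not expect any serious obstacle here; the content is essentially the case-analysis rule for disjunction packaged through the Deduction Theorem. The only point requiring a little care is making sure the disjunction-elimination scheme is genuinely among the intuitionistic propositional theorems (it is, being the standard $\lor$-elimination axiom), and that the Deduction Theorem applies uniformly to arbitrary sets $N$ of formulas, which it does as stated. Since all three steps—two uses of the Deduction Theorem and one use of an intuitionistic tautology with modus ponens—are routine, the whole argument is short.
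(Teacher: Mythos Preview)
Your proposal is correct and follows essentially the same contrapositive argument as the paper: assume both $N\cup\{A\}\vdash C$ and $N\cup\{B\}\vdash C$, combine them via the intuitionistic $\lor$-elimination scheme, and conclude $N\cup\{A\lor B\}\vdash C$. The only cosmetic difference is that the paper unpacks the hypotheses through finite subsets and conjunctions, whereas you invoke the Deduction Theorem directly; your version is, if anything, a bit more streamlined.
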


\begin{proof}
By contraposition. Assume that
\[
 N \cup \{A \} \vdash C \text{ and } N \cup \{B \} \vdash C
\]
Then there are finite subsets $N_1 \subseteq N \cup \{ A \}$ and $N_2 \subseteq N \cup \{ B\}$ such that
\[
 \vdash \bigwedge N_1  \to C \text{ and } \vdash \bigwedge N_2  \to C
\]

Now let $N_1' := N_1 \setminus \{ A \}$ and  $N_2' := N_2 \setminus \{ B \}$. Then $N_1', N_2'$ are finite subsets of $N$, and

\[
 \vdash \bigwedge (N_1' \cup \{A\} ) \to C \text{ and } \vdash \bigwedge (N_2' \cup \{B\} ) \to C.
\]
So 
 \[
  \vdash \bigwedge N_1'    \rightarrow ( A \rightarrow C) \text{ and } \vdash \bigwedge N_2'  \rightarrow ( B \rightarrow C).
 \]
Strengthening the antecedent, we get
 \[
  \vdash \bigwedge (N_1' \cup N_2') \to ( A \rightarrow C) \text{ and } \vdash \bigwedge (N_1' \cup N_2')  \to ( B \rightarrow C))
 \] 
and, therefore,
 \[
  \vdash \bigwedge (N_1' \cup N_2') \to (( A \rightarrow C) \land ( B \rightarrow C)).
 \]
 By propositional reasoning we get
 
  \[
   \vdash \bigwedge (N_1' \cup N_2') \to ( (A \lor B) \rightarrow C),
  \]
  which means that
  \[
  \vdash \bigwedge (N_1' \cup N_2'  \cup \{ A \lor B\}  ) \rightarrow  C.
  \]
  Since $N_1'$ and $N_2'$ are finite subsets of $N$, $(N_1' \cup N_2'  \cup \{ A \lor B\}$ is a finite subset of $N \cup \{  A \lor B \}$,
  so by definition
  \[
      N \cup \{ A \lor B\} \vdash  C.
	\qedhere
  \]
\end{proof}

\begin{theorem}[Prime Lemma] \label{Prime Lemma}
 Let $B$ be a formula and let $N$ be a set of formulas such that $N \nvdash B$.
 Then there exists a prime set $\Pi$ with $N \subseteq \Pi$ and $\Pi \nvdash B$.
\end{theorem}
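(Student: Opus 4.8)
The plan is to prove the Prime Lemma by a Lindenbaum-style construction, enumerating all formulas and extending $N$ step by step while carefully preserving the property that $B$ is not derivable, and finally checking that the limit set is prime.

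First I would fix an enumeration $A_0, A_1, A_2, \dots$ of all formulas in $\LJ$, and define an increasing chain of sets $N = \Pi_0 \subseteq \Pi_1 \subseteq \cdots$ as follows. Given $\Pi_n$, I consider $A_n$. If $\Pi_n \cup \{ A_n \} \vdash B$, I set $\Pi_{n+1} := \Pi_n$ (we refuse to add $A_n$, since adding it would let us derive $B$). Otherwise $\Pi_n \cup \{ A_n \} \nvdash B$, and I must decide how to incorporate $A_n$. The subtle point is the disjunction property: if $A_n$ has the form $C \lor D$, then merely adding $C \lor D$ is not enough, because primeness requires that one of the disjuncts actually be in the final set. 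So in the disjunctive case I would set $\Pi_{n+1} := \Pi_n \cup \{ C \lor D, C \}$ or $\Pi_{n+1} := \Pi_n \cup \{ C \lor D, D \}$, choosing whichever disjunct keeps $B$ underivable; if $A_n$ is not a disjunction I simply set $\Pi_{n+1} := \Pi_n \cup \{ A_n \}$. Finally I take $\Pi := \bigcup_{n} \Pi_n$.

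The key invariant, proved by induction on $n$, is that $\Pi_n \nvdash B$ for every $n$. In the non-disjunctive additions this is immediate from the case hypothesis. In the disjunctive case, this is exactly where the preceding lemma does the work: from $\Pi_n \cup \{ C \lor D \} \nvdash B$ it gives $\Pi_n \cup \{ C \} \nvdash B$ or $\Pi_n \cup \{ D \} \nvdash B$, so at least one of the two candidate extensions preserves underivability of $B$, and I pick that one. (Here I use that $\Pi_n \cup \{ C \lor D \} \nvdash B$ holds because $\Pi_n \cup \{ C \lor D\}$ is interderivable, over $\Pi_n$, with $\Pi_n \cup \{ A_n\}$.) Since derivations are finite, $\Pi \nvdash B$ follows: any derivation of $B$ from $\Pi$ uses only finitely many premises, hence lies inside some $\Pi_n$, contradicting the invariant. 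In particular $\bot \notin \Pi$, giving consistency, condition (iii).

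It remains to verify that $\Pi$ is deductively closed and has the disjunction property. For deductive closure, suppose $\Pi \vdash A$ with $A = A_n$ in the enumeration; then $\Pi_n \cup \{ A_n \} \subseteq \Pi$, and since $\Pi \nvdash B$ we cannot have $\Pi_n \cup \{ A_n \} \vdash B$, so $A_n$ was added at stage $n+1$ and hence $A_n \in \Pi$. For the disjunction property, if $C \lor D \in \Pi$ then $C \lor D = A_n$ for some $n$ and $C \lor D \in \Pi$ means it was added at a stage where $B$ stayed underivable; by the construction in the disjunctive case one of $C, D$ was also thrown in, so $C \in \Pi$ or $D \in \Pi$. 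The main obstacle, and the place demanding care, is the bookkeeping in the disjunctive step: one must ensure that the choice of disjunct is always available (this is precisely the content of the previous lemma) and that the enumeration visits every formula so that both closure and the disjunction property hold at the limit. The rest is routine, relying on the finiteness of derivations and the monotonicity of $\vdash$ under enlarging the premise set.
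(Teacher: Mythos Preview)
Your argument is correct and follows a recognisable Lindenbaum construction, but it differs from the paper's in where the disjunction handling takes place. The paper uses the simplest possible construction: at stage $i$ one sets $N_{i+1} := N_i \cup \{A_i\}$ if this does not derive $B$, and $N_{i+1} := N_i$ otherwise, with \emph{no} special case for disjunctions. The disjunction property is then verified for the limit $N^\star$ directly: if $C \lor D \in N^\star$, then $N^\star \cup \{C \lor D\} = N^\star \nvdash B$, so the preceding lemma, applied to $N^\star$ itself rather than to some finite stage, yields $N^\star \cup \{C\} \nvdash B$ or $N^\star \cup \{D\} \nvdash B$; going back to the enumeration index of the surviving disjunct then shows it was added. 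Your version instead bakes the disjunct choice into the construction, invoking the lemma at each disjunctive stage; this makes the final disjunction-property check trivial at the cost of a more elaborate inductive step. Both routes are standard; the paper's keeps the construction uniform, yours front-loads the work.

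One small slip: in your deductive-closure argument you write ``then $\Pi_n \cup \{A_n\} \subseteq \Pi$'', but at that point $A_n \in \Pi$ is precisely what you are trying to establish, so this inclusion is not yet available. The repair is easy: if $\Pi_n \cup \{A_n\} \vdash B$, then since $\Pi_n \subseteq \Pi$ also $\Pi \cup \{A_n\} \vdash B$, and together with the hypothesis $\Pi \vdash A_n$ this gives $\Pi \vdash B$, a contradiction; hence $\Pi_n \cup \{A_n\} \nvdash B$ and $A_n$ enters at stage $n+1$. This is essentially how the paper handles the same step, via the deduction theorem.
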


\begin{proof}
 
Let $(A_n)_{n \in \mathbb{N}}$ be an enumeration of all formulas.

Now we define $N_0 := N$,
\[
N_{i+1} := \left\{
  \begin{array}{ll}
     N_i \cup \{A_i \},  &\text{if } N_i \cup \{A_i \} \nvdash B\\
    N_i, &\text{ otherwise}
  \end{array}
\right.
\]
and finally 
\[
 N^\star := \bigcup_{i \in \mathbb{N}} N_i
\]
By induction in $i$, one can easily show that for all $i \in \mathbb{N}: N_i \nvdash B$ and, therefore, $N^\star \nvdash B$.

It remains to show that $N^\star$ is prime. We have the following: 
\begin{itemize}
 \item $\bot \notin  N^\star$: Since $\bot \notin N_i$ for all $i \in \mathbb{N}$, which can be shown by induction on $i$.
 \item $ N^\star$ is deductively closed: Assume it is not, i.e., there is a formula $A$ with
 \[
   N^\star \vdash A \text{ but } A \notin  N^\star
 \]

 Since $N^\star \vdash A$ but $N^\star \nvdash B$, we know that
 
 \[
   N^\star \cup \{A \} \nvdash B \quad
 \]
 Otherwise, by the deduction theorem \ref{Deduction Theorem}
 \[
   N^\star \vdash A \rightarrow B \text{ and }  N^\star \vdash A 
 \]
so by propositional reasoning,
\[
  N^\star \vdash B, \text{ which contradicts our observation above.}
\]

\medskip
 Since $(A_n)_{n \in \mathbb{N}}$ is an enumeration of all formulas, there is some $i$ such that $A = A_i$.
 But then
 \[
  N_i \cup \{A_i \} \nvdash B.
 \]
So by construction
 \[
  N_{i + 1} = N_i \cup \{ A_i \}
 \]
 and, therefore,
 \[
  A = A_i \in N_{i +1} \subseteq  N^\star,
 \]
which contradicts our assumption.

 \item  $N^\star$ has the disjunction property:
 Assume that $C \lor D \in  N^\star$.
 Then there is some $i$ such that $C \lor D = A_i$ and there are $i_1, i_2$ such that
 \[
  C = A_{i_1} \text{ and } D =  A_{i_2}
 \]
 Now we have
 \[
  N^\star =  N^\star\cup \{C \lor D \} \nvdash B
 \]
 By the lemma above it follows that
\[
 N^\star \cup \{C \} \nvdash B \text { or } N^\star \cup \{D \} \nvdash B
\]

In the first case, we have that 
\[
 N_{i_1} \cup \{A_{i_1} \} \nvdash B
\]
so by the definition of $N_{i_1 +1}$,
\[
 N_{i_1 +1} = N_{i_1} \cup \{A_{i_1} \} = N_{i_1} \cup \{ C \}
\]
which means that $C \in N_{i_1 +1 }$ and therefore $C \in N^\star$.
The second case is analogous.
\qedhere
\end{itemize}
\end{proof}

\begin{lemma}\label{A_previous_lemma}
 Let $\Delta$ be a prime set and $t$ be a justification term. Then
 \[
  t^{-1} \Delta \subseteq \Delta.
 \]
\end{lemma}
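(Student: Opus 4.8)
The plan is first to unpack the notation: $t^{-1}\Delta$ denotes the set $\{A \mid t\justifies A \in \Delta\}$ of formulas justified by $t$ inside $\Delta$. Under this reading the asserted inclusion $t^{-1}\Delta \subseteq \Delta$ says precisely that whenever $t\justifies A \in \Delta$ we also have $A \in \Delta$; this is the syntactic shadow of factivity, and I expect it to fall out at once from axiom~(4), namely $t\justifies A \to A$, combined with the deductive closure of prime sets.

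Concretely I would argue as follows. Let $A$ be an arbitrary element of $t^{-1}\Delta$, so by definition $t\justifies A \in \Delta$, and hence $\Delta \vdash_{\HJICS} t\justifies A$. Since $t\justifies A \to A$ is an axiom of $\HJI$, we have $\vdash_{\HJICS} t\justifies A \to A$, so a single application of modus ponens gives $\Delta \vdash_{\HJICS} A$. By condition~(ii) in the definition of a prime set, $\Delta$ is deductively closed, whence $A \in \Delta$. As $A$ was arbitrary, $t^{-1}\Delta \subseteq \Delta$.

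I do not anticipate any real obstacle: the whole content is a direct appeal to the factivity axiom together with the closure properties already guaranteed by primeness. The one point requiring a moment's care is the passage from membership to derivability — recording explicitly that $t\justifies A \in \Delta$ yields $\Delta \vdash_{\HJICS} t\justifies A$ — so that modus ponens is applied within the consequence relation $\vdash_{\HJICS}$ and not merely at the level of theorems.
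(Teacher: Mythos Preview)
Your proof is correct and follows essentially the same approach as the paper's: both take $A \in t^{-1}\Delta$, use the factivity axiom $t\justifies A \to A$, and invoke deductive closure of the prime set~$\Delta$ to conclude $A \in \Delta$. The only cosmetic difference is that the paper argues directly with membership in~$\Delta$ (noting that $\Delta$, being deductively closed, contains all axioms and is closed under modus ponens), whereas you route the argument through the derivability relation $\vdash_{\HJICS}$ before applying closure; both are fine.
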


\begin{proof}
Let $A \in t^{-1} \Delta$. Then $t:A \in \Delta$. Since $\Delta$ is deductively closed, it contains all axioms, thus
$t:A \rightarrow A \in \Delta$. Again, since $\Delta$ is deductively closed, it follows by $(MP)$ that $A \in \Delta$.
\end{proof}

\begin{definition}[Canonical Basic Modular Model]

The canonical basic modular model is 
\[
 B^{can} := (W^{can}, \leq^{can}, *^{can}) 
\]
where
 \begin{itemize}
 \item[(i)] $W^{can} \quad := \quad \{ \Delta \subseteq \LJ : \Delta \text{ is prime} \}$
 
 \item[(ii)] $ \leq^{can} \quad := \quad \subseteq$
 
 \item[(iii)] $*^{can}(p, \Delta) = 1$ iff $P \in \Delta$

 \item[(iv)] $*^{can}(t, \Delta) := t^{-1} \Delta := \{ A \mid t:A \in \Delta \} $

\end{itemize}
\end{definition}

\begin{lemma}
$B^{can}$ is a basic evaluation.
\end{lemma}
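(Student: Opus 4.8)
The plan is to verify, clause by clause, that the tuple $B^{can} = (W^{can}, \leq^{can}, *^{can})$ meets every requirement in the definition of a basic evaluation. I would dispatch the structural conditions first. That $\leq^{can} = \subseteq$ is a partial order on $W^{can}$ is immediate, and the two maps have the correct types by construction, since $*^{can}(p,\Delta) \in \{0,1\}$ and $*^{can}(t,\Delta) = t^{-1}\Delta \subseteq \LJ$. For non-emptiness of $W^{can}$ I would appeal to consistency of $\HJICS$: as $\varnothing \nvdash \bot$, the Prime Lemma~\ref{Prime Lemma} applied with $N = \varnothing$ and $B = \bot$ produces a prime set, so $W^{can} \neq \varnothing$.

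Next I would check the four closure conditions, each of which follows the same recipe: unfold the set operation on the left-hand side, then use that the relevant prime set $\Delta$ is deductively closed and contains the matching axiom, and finish with modus ponens. For condition (1), a witness $B \in s^{-1}\Delta \cdot t^{-1}\Delta$ comes with some $A$ such that $s\justifies(A \to B) \in \Delta$ and $t\justifies A \in \Delta$; since $\Delta$ contains the application axiom $s\justifies(A\to B) \to (t\justifies A \to s\cdot t\justifies B)$, two applications of modus ponens give $s\cdot t\justifies B \in \Delta$, i.e.\ $B \in (s\cdot t)^{-1}\Delta$. Condition (2) uses the sum axioms $s\justifies A \to s+t\justifies A$ and $t\justifies A \to s+t\justifies A$; condition (4) reads $s\justifies s^{-1}\Delta$ as $\{\, s\justifies A : s\justifies A \in \Delta \,\}$ and applies the axiom $s\justifies A \to \bang s \justifies s \justifies A$; and condition (3) uses that whenever $(c,A)\in\CS$, axiom necessitation makes $c\justifies A$ a theorem, so $c\justifies A \in \Delta$ by deductive closure.

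Finally, the monotonicity conditions are immediate from $\leq^{can} = \subseteq$: if $\Delta \subseteq \Sigma$, then $p \in \Delta$ forces $p \in \Sigma$, giving (M1), and $A \in t^{-1}\Delta$ gives $t\justifies A \in \Delta \subseteq \Sigma$, hence $A \in t^{-1}\Sigma$, giving (M2) read as $t^{-1}\Delta \subseteq t^{-1}\Sigma$. I do not expect a genuine obstacle: the argument is essentially bookkeeping, with the only substance lying in non-emptiness, which rests on the Prime Lemma together with consistency of the logic. The one point requiring a little care is unwinding the set operations $\cdot$ and $s\justifies(\cdot)$ correctly, after which each closure condition collapses to deductive closure of prime sets plus the corresponding axiom.
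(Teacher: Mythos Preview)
Your proposal is correct and follows essentially the same approach as the paper: non-emptiness via consistency plus the Prime Lemma, the four closure conditions via deductive closure of prime sets combined with the corresponding axiom (application, sum, axiom necessitation, positive introspection), and monotonicity directly from $\leq^{can} = {\subseteq}$. The only additions are your explicit remarks that $\subseteq$ is a partial order and that the two maps have the right types, which the paper leaves implicit.
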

\begin{proof}
 $W \neq \varnothing$: By the consistency of $\HJICS$ we have that $\varnothing \nvdash \bot$, it follows by the prime lemma \ref{Prime Lemma} that there exists a prime set, so $W^{can} \neq \varnothing$. 
 
 Next, we check the conditons on the sets of formulas $t^{*^{can}}_w$.
 \begin{itemize}
   \item[(1)] 
	$s^{*^{can}}_w \cdot t^{*^{can}}_w \subseteq (s \cdot t)^*_w$.
    Let $A \in s^{*^{can}}_w  \cdot t^{*^{can}}_w$. Then there is a formula $B \in t^{*^{can}}_w$ such that $B \to A \in s^{*^{can}}_w$.
    So $s:B \to A \in w$ and $t:B \in w$.
    Since w is a prime set, it is deductively closed, so it contains the axiom $s: (B \to A) \to (t:B \to s \cdot t: A)$. Again since $w$ is deductively closed, it follows by (MP) that
    $s \cdot t: A \in w$, so $A \in (s \cdot t)^{-1} w = (s \cdot t)^{*^{can}}_w$.
   \item[(2)] 
	$s^{*^{can}}_w  \cup t^{*^{can}}_w  \subseteq (s + t)^{*^{can}}_w$.
   Let $A \in s^{*^{can}}_w \cup t^{*^{can}}_w$. Case 1: $A \in s^{*^{can}}_w = s^{-1} w$. Then $s:A \in w$. Since $w$ is deductively closed, it contains the  axiom 
   $s:A \to (s+t):A$. Thus by (MP) we find $(s+t):A \in w$, i.e., $A \in (s+t)^{-1}w = (s + t)^{*^{can}}_w$. The second case is analogous.
   \item[(3)] 
	$(t,A) \in CS \quad \Longrightarrow \quad A \in t^{*^{can}}_w$.
   By axiom necessitation we have that $\HJICS \vdash t:A$, so $w \vdash t:A$.
   Since $w$ is deductively closed, it follows  that $t:A \in w$, so $A \in t^{-1} w = t^{*^{can}}_w$. 
   \item[(4)] 
	$s:s^{*^{can}}_w \subseteq (\bang s)^{*^{can}}_w $.
   Let $A \in s:s^{*^{can}}_w$. Then $A$ is of the form $s:B$ for some formula $B \in s^{*^{can}}_w = s^{-1} w$, i.e., $s:B \in w$.
   We find that the axiom
   $(s:B) \to~ \bang s:(s:B) \in w$, so $\bang s:(s:B) \in w$, which means that $s:B \in (\bang s)^{-1}w = (\bang s)^{*^{can}}_w $.
   \end{itemize}
Now we check the monotonicity conditions.
 \begin{itemize}
  \item[(M1)] Assume that $*(p, \Gamma) = 1$ and $\Gamma \subseteq \Delta$. By the definition of  of $*$ we have that $p \in \Gamma$, so $p \in \Delta$ hence $*(p, \Delta) = 1$.
  \item[(M2)] Now assume that $\Gamma \subseteq \Delta$. Then $t^{-1} \Gamma \subseteq t^{-1} \Delta$ which means $t^*_\Gamma \subseteq t^*_\Delta$.
\qedhere
  \end{itemize}
\end{proof}

\begin{lemma}[Truth Lemma] \label{Truth Lemma for the Canonical Basic Modular Model}
For any formula $A$ and any prime set $\Delta$ :

\[
 A \in \Delta \quad \Longleftrightarrow  \quad (*^{can},\Delta) \vDash A
\]
\end{lemma}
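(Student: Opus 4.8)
The plan is to prove both directions of the biconditional simultaneously by induction on the structure of the formula $A$, following the standard intuitionistic case analysis.

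The base cases are immediate from the definitions. For $A = \bot$, the consistency of $\Delta$ gives $\bot \notin \Delta$, while $(*^{can},\Delta) \nvDash \bot$ holds by the truth clause for $\bot$; both sides fail, so the equivalence is trivial. For an atomic proposition $p$, the truth clause for atoms together with clause (iii) of the canonical model yields $(*^{can},\Delta) \vDash p$ iff $*^{can}(p,\Delta) = 1$ iff $p \in \Delta$. The term case $A = t:B$ is equally direct and, notably, uses no induction hypothesis: unfolding the truth clause gives $(*^{can},\Delta) \vDash t:B$ iff $B \in t^{*^{can}}_\Delta$, and clause (iv) of the canonical model says $t^{*^{can}}_\Delta = t^{-1}\Delta = \{ C \mid t:C \in \Delta \}$, so this holds iff $t:B \in \Delta$.

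The conjunction and disjunction cases are routine applications of the primeness conditions. For $A = B \land C$, deductive closure handles both directions, via conjunction elimination for the forward implication and conjunction introduction for the converse, reducing everything to the induction hypotheses for $B$ and $C$. For $A = B \lor C$, the forward direction uses the disjunction property of $\Delta$ and the backward direction uses deductive closure with disjunction introduction; the induction hypotheses then close both cases.

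The hard part will be the implication case $A = B \rightarrow C$, which is where the intuitionistic semantics and the Prime Lemma are essential. For the forward direction, suppose $B \rightarrow C \in \Delta$, and let $\Gamma$ be any prime set with $\Delta \subseteq \Gamma$ and $(*^{can},\Gamma) \vDash B$; by the induction hypothesis $B \in \Gamma$, and since $\Delta \subseteq \Gamma$ we have $B \rightarrow C \in \Gamma$, so deductive closure gives $C \in \Gamma$, whence $(*^{can},\Gamma) \vDash C$ by the induction hypothesis. For the backward direction I argue contrapositively. If $B \rightarrow C \notin \Delta$, then $\Delta \cup \{B\} \nvdash C$, since otherwise the Deduction Theorem~\ref{Deduction Theorem} together with deductive closure would force $B \rightarrow C \in \Delta$. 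Applying the Prime Lemma~\ref{Prime Lemma} to $\Delta \cup \{B\}$ and the target formula $C$ yields a prime set $\Gamma$ with $\Delta \cup \{B\} \subseteq \Gamma$ and $\Gamma \nvdash C$, so $C \notin \Gamma$ by deductive closure. Then $\Delta \leq^{can} \Gamma$, and the induction hypothesis gives $(*^{can},\Gamma) \vDash B$ from $B \in \Gamma$ and $(*^{can},\Gamma) \nvDash C$ from $C \notin \Gamma$, so $\Gamma$ witnesses $(*^{can},\Delta) \nvDash B \rightarrow C$. The only step requiring anything beyond the definitions and the primeness conditions is this construction of a refuting extension, which is precisely what the Prime Lemma supplies.
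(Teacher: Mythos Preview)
Your proof is correct and follows essentially the same approach as the paper: induction on the structure of $A$, with the base cases and the $t{:}B$ case handled directly from the definitions, the $\land$/$\lor$ cases via deductive closure and the disjunction property, and the implication case via the Deduction Theorem and the Prime Lemma to build the refuting prime extension. The only cosmetic difference is that you spell out the $\bot$ case using consistency, whereas the paper dismisses it as ``by definition.''
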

\begin{proof}
 By induction on the formula $A$. We distinguish the following cases.
 \begin{enumerate}
\item
 $A = p$ or $A = \bot$. By definition.

 \item
$A = B \land C$. 
Assume that $B \land C \in \Delta$. Since $\Delta$ is deductively closed, we have $B \in \Delta$ and $C \in \Delta$, so it follows by the induction hypothesis that $(*^{can},\Delta) \vDash B$ and $(*^{can},\Delta) \vDash C$.

For the other direction assume that $(*^{can},\Delta) \vDash B \land C$, so $(*^{can},\Delta) \vDash B$ and $(*^{can},\Delta) \vDash C$. By the induction hypothesis, we get that $B \in \Delta$ and $C \in \Delta$. Since $\Delta$ is deductively closed,
it follows that $B \land C \in \Delta$.
\item
$A = B \lor C$. 
Assume that $B \lor C \in \Delta$. Since $\Delta$ has the disjunction property, it follows that $B \in \Delta$ or $C \in \Delta$, so by the induction hypothesis, $(*^{can},\Delta) \vDash B$ or $(*^{can},\Delta) \vDash C$, so
$(*^{can},\Delta) \vDash B \lor C$.

For the other direction assume that $(*^{can},\Delta) \vDash B \lor C$. Then
\[
(*^{can},\Delta) \vDash B \text{ or } (*^{can},\Delta) \vDash C,
\]
so by the induction hypothesis, $B \in \Delta$ or $C \in \Delta$. 
Since $\Delta$ is deductively closed, it follows that $B \lor C \in \Delta$.
 \item
$A = B \to C$. 
Assume that $B \to C \in \Delta$. We have to show that $(*^{can},\Delta) \vDash B \to C$, so
let $\Gamma$ be a prime set such that  $\Delta \subseteq \Gamma$ and $(*^{can}, \Gamma) \vDash B$. It follows by the induction hypothesis that
$B \in \Gamma$, and since $B \to C \in \Gamma$ and $\Gamma$ is deductively closed, we have that $C \in \Gamma$. Applying the induction hypothesis again,
we get that $(*^{can}, \Gamma) \vDash C$. 

For the other direction assume that  $(*^{can},\Delta) \vDash B \to C$. We have to show that $B \to C \in \Delta$. Assume for a contradiction that
$B \to C \notin \Delta$. Since $\Delta$ is deductively closed, it follows that $\Delta \nvdash B \to C$.
It follows by the deduction theorem \ref{Deduction Theorem} that $\Delta \cup \{ B\} \nvdash C$.
By the prime lemma \ref{Prime Lemma}, there is a prime set $\Gamma$ such that $\Delta \cup \{ B \} \subseteq \Gamma$ and $\Gamma \nvdash C$, so in particular, $C \notin \Gamma$.
By the induction hypothesis it follows that $(*^{can}, \Gamma) \vDash B$ and $(*^{can}, \Gamma) \nvDash C$, contradicting our assumption that
$(*^{can},\Delta) \vDash B \to C$. 
\item
$A = t:B$. We have
\[
t:B \in \Delta
\quad\Longleftrightarrow\quad
B \in t^{-1} \Delta = *^{can}(t, \Delta)
\quad\Longleftrightarrow\quad
(*^{can}, \Delta) \vDash t:B
.
\]
\qedhere
\end{enumerate}
\end{proof}

\begin{lemma}
$B^{can}$ is a basic modular model.
\end{lemma}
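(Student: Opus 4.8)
The final statement asserts that $B^{can}$ is a basic modular model. Recall that a basic modular model is by definition a basic evaluation that is factive. We have already established (in the preceding lemma) that $B^{can}$ is a basic evaluation, so the entire content of this final lemma reduces to verifying the factivity condition. My plan is therefore to focus exclusively on showing that for every formula $A$, every term $t$, and every prime set $\Delta \in W^{can}$,
\[
 A \in t^{*^{can}}_\Delta \quad \Longrightarrow \quad (B^{can}, \Delta) \vDash A.
\]

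The key observation is that the definition of $*^{can}$ gives $t^{*^{can}}_\Delta = t^{-1}\Delta = \{ A \mid t{:}A \in \Delta\}$, so the hypothesis $A \in t^{*^{can}}_\Delta$ unpacks directly to $t{:}A \in \Delta$. From here the factivity argument runs through two already-proved ingredients. First, Lemma~\ref{A_previous_lemma} tells us that $t^{-1}\Delta \subseteq \Delta$ for any prime $\Delta$; since $A \in t^{-1}\Delta$, this yields $A \in \Delta$. (Equivalently, one argues directly: $\Delta$ is deductively closed and contains the factivity axiom $t{:}A \to A$, so from $t{:}A \in \Delta$ modus ponens gives $A \in \Delta$.) Second, the Truth Lemma~\ref{Truth Lemma for the Canonical Basic Modular Model} converts membership into truth: from $A \in \Delta$ we conclude $(B^{can}, \Delta) \vDash A$, which is exactly what factivity demands.

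So the proof is short and assembles as follows. Suppose $A \in t^{*^{can}}_\Delta$. By definition of $*^{can}$ this means $t{:}A \in \Delta$, i.e.\ $A \in t^{-1}\Delta$. Applying Lemma~\ref{A_previous_lemma} (or invoking the factivity axiom and deductive closure of the prime set directly) we obtain $A \in \Delta$. By the Truth Lemma we then get $(B^{can}, \Delta) \vDash A$. Since $A$, $t$, and $\Delta$ were arbitrary, $B^{can}$ is factive, and being both a basic evaluation and factive, it is a basic modular model.

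I do not anticipate a genuine obstacle here: every piece has been prepared in advance, and the argument is just a matter of chaining the definition of $*^{can}$, Lemma~\ref{A_previous_lemma}, and the Truth Lemma. The one point to state carefully is the direction of the Truth Lemma being used — we need the left-to-right implication ($A \in \Delta \Rightarrow (B^{can},\Delta) \vDash A$) — and to make explicit that nothing beyond factivity remains to check because the basic-evaluation conditions (including the closure conditions (1)--(4) and the monotonicity conditions (M1), (M2)) were already discharged in the previous lemma.
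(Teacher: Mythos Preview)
Your proof is correct and follows essentially the same approach as the paper: reduce to factivity (since the basic-evaluation conditions were already verified), use Lemma~\ref{A_previous_lemma} to pass from $A \in t^{-1}\Delta$ to $A \in \Delta$, and then invoke the Truth Lemma to conclude $(B^{can},\Delta) \vDash A$. Your exposition is, if anything, slightly more explicit than the paper's in unpacking the definition of $*^{can}$ and noting which direction of the Truth Lemma is used.
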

\begin{proof}
We only have to show factivity, for which we use the truth lemma.
 Assume that 
\[
A \in *^{can}(t, \Delta) = t^{-1} \Delta.
\]
By Lemma~\ref{A_previous_lemma}
we know that $t^{-1} \Delta \subseteq \Delta$, so we have $A \in \Delta$.
 By the truth lemma for the canonical basic modular model, 
we can conclude that $(*^{can}, \Delta) \vDash A$. So factivity is shown.
\end{proof}

\begin{theorem}[Completeness of $\HJICS$ with respect to basic modular models]
For any formula $A$:
\[
 \vDash_\mathrm{basic modular} A \quad \Longrightarrow  \quad  \HJICS \vdash A
\]
 \end{theorem}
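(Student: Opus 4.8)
The plan is to prove the contrapositive: assuming $\HJICS \nvdash A$, I will exhibit a single basic modular model together with a world at which $A$ fails, which immediately gives $\nvDash_\mathrm{basic modular} A$. The entire argument is the final assembly of the apparatus already built up in this section, with the canonical basic modular model $B^{can}$ serving as the witnessing model, so essentially no new construction is required.

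First I would observe that $\HJICS \nvdash A$ is the same as $\varnothing \nvdash A$, so the Prime Lemma (Theorem~\ref{Prime Lemma}) applies with $N := \varnothing$ and $B := A$. This yields a prime set $\Pi$ (with $\varnothing \subseteq \Pi$ trivially) such that $\Pi \nvdash A$. Since primeness includes deductive closure, if $A$ were an element of $\Pi$ we would have $\Pi \vdash A$; hence $A \notin \Pi$. Moreover, because $\Pi$ is prime it is by definition a state of the canonical model, i.e., $\Pi \in W^{can}$.

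Next I would invoke the Truth Lemma (Lemma~\ref{Truth Lemma for the Canonical Basic Modular Model}) at the state $\Pi$: from $A \notin \Pi$ we obtain $(*^{can},\Pi) \nvDash A$. Since we have already verified that $B^{can}$ is a basic modular model, the pair $B^{can}$ and $\Pi$ witnesses $\nvDash_\mathrm{basic modular} A$, which completes the contrapositive and hence the theorem. I do not expect a genuine obstacle at this stage, as all the substantive work—the Prime Lemma, the construction of $B^{can}$, its verification as a basic evaluation, the Truth Lemma, and factivity—has been discharged earlier. The only points demanding any care are the use of deductive closure to pass from $\Pi \nvdash A$ to $A \notin \Pi$, and the trivial observation that the empty context meets the hypothesis of the Prime Lemma; everything else is a direct appeal to the canonical model and the Truth Lemma.
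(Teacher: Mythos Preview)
Your proposal is correct and follows essentially the same route as the paper: contrapose, apply the Prime Lemma to obtain a prime set not proving $A$, conclude $A$ is not a member by deductive closure, and then invoke the Truth Lemma together with the fact that $B^{can}$ is a basic modular model. The only difference is that you spell out a couple of minor points (that $\HJICS \nvdash A$ means $\varnothing \nvdash A$, and why $\Pi \nvdash A$ gives $A \notin \Pi$) that the paper leaves implicit.
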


\begin{proof}
By contraposition. Assume that $\HJICS \nvdash A$.
By the prime lemma \ref{Prime Lemma}, there exists a prime set $\Delta$ such that $\Delta \nvdash A$.
In particular, $A \notin \Delta$. By the truth lemma \ref{Truth Lemma for the Canonical Basic Modular Model}, it follows that

\[
 (*^{can}, \Delta) \nvDash A
\]
since this structure is a basic modular model, it follows that
 \[
    \nvDash_\mathrm{basic modular} A.
\qedhere
 \]
\end{proof}


\begin{thebibliography}{10}

\bibitem{Art95TR}
S.~N. Artemov.
\newblock Operational modal logic.
\newblock Technical Report MSI 95--29, Cornell University, Dec. 1995.

\bibitem{Art01BSL}
S.~N. Artemov.
\newblock Explicit provability and constructive semantics.
\newblock {\em Bulletin of Symbolic Logic}, 7(1):1--36, Mar. 2001.

\bibitem{Art06TCS}
S.~N. Artemov.
\newblock Justified common knowledge.
\newblock {\em Theoretical Computer Science}, 357(1--3):4--22, July 2006.

\bibitem{Art08RSL}
S.~N. Artemov.
\newblock The logic of justification.
\newblock {\em The Review of Symbolic Logic}, 1(4):477--513, Dec. 2008.

\bibitem{Art12SL}
S.~N. Artemov.
\newblock The ontology of justifications in the logical setting.
\newblock {\em Studia Logica}, 100(1--2):17--30, Apr. 2012.
\newblock Published online February~2012.

\bibitem{ArtIem07JSL}
S.~N. Artemov and R.~Iemhoff.
\newblock The basic intuitionistic logic of proofs.
\newblock {\em Journal of Symbolic Logic}, 72(2):439--451, June 2007.

\bibitem{BalRenSme14APAL}
A.~Baltag, B.~Renne, and S.~Smets.
\newblock The logic of justified belief, explicit knowledge, and conclusive
  evidence.
\newblock {\em Annals of Pure and Applied Logic}, 165(1):49--81, Jan. 2014.
\newblock Published online in August 2013.

\bibitem{BucKuzStu11JANCL}
S.~Bucheli, R.~Kuznets, and T.~Studer.
\newblock Justifications for common knowledge.
\newblock {\em Journal of Applied Non-Classical Logics}, 21(1):35--60,
  Jan.--Mar. 2011.

\bibitem{BucKuzStu11WoLLIC}
S.~Bucheli, R.~Kuznets, and T.~Studer.
\newblock Partial realization in dynamic justification logic.
\newblock In L.~D. Beklemishev and R.~de~Queiroz, editors, {\em {L}ogic,
  {L}anguage, {I}nformation and {C}omputation, 18th~International {W}orkshop,
  {WoLLIC~2011}, {P}hiladelphia, {PA}, {USA}, {M}ay 18--20, 2011, Proceedings},
  volume 6642 of {\em Lecture Notes in Artificial Intelligence}, pages 35--51.
  Springer, 2011.

\bibitem{bks12a}
S.~Bucheli, R.~Kuznets, and T.~Studer.
\newblock Realizing public announcements by justifications.
\newblock {\em Journal of Computer and System Sciences}, 80(6):1046--1066,
  2014.

\bibitem{Das11JLC}
E.~Dashkov.
\newblock Arithmetical completeness of the intuitionistic logic of proofs.
\newblock {\em Journal of Logic and Computation}, 21(4):665--682, Aug. 2011.
\newblock Published online August~2009.

\bibitem{Fit05APAL}
M.~Fitting.
\newblock The logic of proofs, semantically.
\newblock {\em Annals of Pure and Applied Logic}, 132(1):1--25, Feb. 2005.

\bibitem{2014arXiv1407.4647G}
M.~{Ghari}.
\newblock {Justification logics in a fuzzy setting}.
\newblock {\em ArXiv e-prints}, July 2014.

\bibitem{komaogst}
I.~Kokkinis, P.~Maksimovi\'c, Z.~Ognjanovi\'c, and T.~Studer.
\newblock First steps towards probabilistic justification logic.
\newblock {\em Logic Journal of IGPL}, 23(4):662--687, 2015.

\bibitem{KuzStu12AiML}
R.~Kuznets and T.~Studer.
\newblock Justifications, ontology, and conservativity.
\newblock In T.~Bolander, T.~Bra{\"u}ner, S.~Ghilardi, and L.~Moss, editors,
  {\em Advances in Modal Logic, Volume 9}, pages 437--458. College
  Publications, 2012.

\bibitem{KuzStu13LFCS}
R.~Kuznets and T.~Studer.
\newblock Update as evidence: Belief expansion.
\newblock In S.~N. Artemov and A.~Nerode, editors, {\em Logical Foundations of
  Computer Science, International Symposium, {LFCS}~2013, {S}an {D}iego, {CA},
  {USA}, {J}anuary 6--8, 2013, Proceedings}, volume 7734 of {\em Lecture Notes
  in Computer Science}, pages 266--279. Springer, 2013.

\bibitem{KSweak}
R.~Kuznets and T.~Studer.
\newblock Weak arithmetical interpretations for the logic of proofs.
\newblock {\em Logic Journal of the IGPL}, 2016.

\end{thebibliography}

\end{document}